\documentclass[12pt, reqno, a4paper]{amsart}
\usepackage{times}

\usepackage[utf8]{inputenc}
\usepackage[USenglish]{babel}
\usepackage{amsmath,amsthm,amssymb,amsfonts}
\usepackage{mathtools}
\usepackage{mathrsfs}
\usepackage{bbm}

\usepackage{indentfirst}
\usepackage{enumitem}
\usepackage{xcolor}
\usepackage{graphicx}

\usepackage{float}

\usepackage[breaklinks=true, bookmarksopenlevel=1, bookmarksdepth=2]{hyperref}
\hypersetup{
colorlinks,
   linkcolor={cyan!80!black},
   citecolor={cyan!80!black},
 urlcolor={cyan!80!black}
}

\allowdisplaybreaks

\newtheorem{thm}{Theorem}[section]

\newtheorem{lem}[thm]{Lemma}

\theoremstyle{plain} % just in case the style had changed
\newcommand{\thistheoremname}{}
\newtheorem*{genericthm}{\thistheoremname}

\theoremstyle{definition}

\theoremstyle{remark}

\numberwithin{equation}{section}

\newcommand{\N}{\mathbb{N}}      % N = Naturals
\newcommand{\Z}{\mathbb{Z}}      % Z = Integers
\newcommand{\R}{\mathbb{R}}      % R = Reals
\newcommand{\eps}{\varepsilon}   % epsilon

\newcommand{\ul}[1]{\mathbf{#1}}

\usepackage[margin=1in]{geometry}

\restylefloat{table}

\setlist[itemize]{leftmargin=*}
\setlist[enumerate]{leftmargin=*}

\begin{document}

\title{Infinite Sidon-type sets for zero-sum linear forms}%

\author{Christian T\'afula}
\address{Institute of Mathematics, Statistics\\
and Computer Science\\
University of S\~ao Paulo\\
Rua do Mat\~ao, 1010\\
S\~ao Paulo, SP 05508-090\\
Brazil}
\curraddr{}
\email{tafula@ime.usp.br}
\thanks{}

\subjclass[2020]{11B13, 11B34}%
\keywords{Sidon sets, zero-sum linear forms, representation functions}%

% ----------------------------------------------------------------
\begin{abstract}
 Let $h \geq 2$, and let $\mathbf{b} = (b_1,\dots,b_h)\in \mathbb{Z}^h$ be a zero-sum vector with nonzero coordinates. For a set $A=\{a_1<a_2<\cdots\}\subseteq\mathbb{N}$, let $r_{A,\mathbf{b}}(n)$ denote the number of $h$-tuples $(x_1,\ldots,x_h)$ of pairwise distinct elements of $A$ satisfying $b_1x_1+\cdots+b_hx_h=n$. We study density restrictions on sets $A$ for which these representation counts remain small, obtaining analogues of the classical density theorem for infinite Sidon sets.

 In the case $\mathbf{b} = (c_1,-c_1,\dots,c_k,-c_k)$, we prove that if $A(x)/(x/\log x)^{1/2k}\to\infty$, then $\frac{1}{x}\sum_{|n|\leq x} r_{A,\mathbf{b}}(n)\to\infty$, whereas if $A(x)\gg x^{1/2k}$, then $\frac{1}{x}\sum_{|n|\leq x} r_{A,\mathbf{b}}(n)\gg\log x$. This recovers Chen's theorem on $B_{2k}$-sequences. For general zero-sum vectors $\mathbf{b}$, we prove analogous bounds under gap conditions: if $a_{n+1}-a_n=o(n^{h-1}\log n)$, then $\frac{1}{x}\sum_{|n|\leq x} r_{A,\mathbf{b}}(n)\to\infty$, whereas if $a_{n+1}-a_n\ll n^{h-1}$, then $\frac{1}{x}\sum_{|n|\leq x} r_{A,\mathbf{b}}(n)\gg\log x$.
\end{abstract}

\maketitle
% ----------------------------------------------------------------

%%%%%%%%%%%%%%%%%%%%%%%%%%%%%%%%%%%%%%%%%%%%%%%%%%%%%%%%%
\section{Introduction}
 A \emph{Sidon set} (or \emph{$B_2$-set}) is a set $A\subseteq \Z$ in which all pairwise sums are distinct; equivalently, if $a+b=c+d$ with $a,b,c,d\in A$, then $\{a,b\}=\{c,d\}$. In the finite setting, the maximal size $F_2(n)$ of a Sidon set contained in $[1,n]$ satisfies $F_2(n)\sim \sqrt{n}$, with constructions due to Singer, Bose, and Chowla and earlier bounds of Erd\H{o}s and Tur\'an \cite{erdtur41}; see also \cite{halberstam83}.

 The infinite setting is subtler. Throughout, if $A\subseteq \N$, we write
 \[ A=\{a_1<a_2<\cdots\} \]
 for its increasing enumeration, and $A(x):=|A\cap[1,x]|$. Erd\H{o}s showed that there exist infinite Sidon sets $A\subseteq \N$ with $A(x)\gg x^{1/2}$ along an unbounded sequence of $x$, while also proving the complementary bound
 \begin{equation}
  \liminf_{x\to\infty}\frac{A(x)}{(x/\log x)^{1/2}}<\infty. \label{erdos-liminf}
 \end{equation}
 Thus the density of an infinite Sidon set must dip below $((\log x)/x)^{1/2}$ infinitely often. Constructions of relatively dense infinite Sidon sets are also known; for instance, Ruzsa \cite{ruz98} constructed Sidon sets with $A(x)\gg x^{\sqrt{2}-1}$, and $\sqrt{2}-1=0.4142\ldots$ remains the best explicit exponent known.

 More generally, a \emph{$B_h[g]$-set} is a set $A\subseteq \Z$ in which every integer has at most $g$ representations as a sum of $h$ (not necessarily distinct) elements of $A$, counted up to permutation. Counting gives the upper bound $F_h^g(n)\ll_{g,h} n^{1/h}$ for the largest $B_h[g]$-subset of $[1,n]$, while probabilistic constructions show that for every $\eps>0$ there exists $g=g_{h,\eps}$ and a $B_h[g]$-set $A\subseteq \N$ with $A(n)\gg_\eps n^{1/h-\eps}$ (see \cite[\S1.7]{taovu06}). For infinite $B_h$-sets (the case $g=1$), the analogue of \eqref{erdos-liminf} is known for even $h$: Jia \cite{jia94} proved it for $B_{2k}$-sequences under the auxiliary growth condition $A(n^2)\leq A(n)^2$, and Chen \cite{che93} removed this condition. The corresponding problem for odd $h$ remains open.

 In this paper we consider representation constraints coming from zero-sum linear forms. Let $h\geq 2$ and let $\ul{b}=(b_1,\ldots,b_h)\in \Z^h$ with $b_i\neq 0$ for all $i$. For $A\subseteq \N$ and $n\in \Z$ we define the \emph{distinct ordered} representation function
 \begin{equation}
  r_{A,\ul{b}}(n):=\#\{(x_1,\ldots,x_h)\in A^h ~|~ x_i\neq x_j,\ \sum_{i=1}^h b_i x_i=n\}. \label{def-rp}
 \end{equation}
 We say that $A$ is a \emph{weak $B_h[g]$-set with respect to $\ul{b}$} if $r_{A,\ul{b}}(n)\leq g$ for all $n\in \Z$. This includes classical Sidon sets (taking $\ul{b}=(1,-1)$ and ignoring diagonals), and also other additive restrictions described by linear forms; see Nathanson \cite{nat22} for related generalizations.

 Our main results concern the case of \emph{zero-sum} vectors, namely $\sum_{i=1}^h b_i=0$. In this regime the linear form is translation-invariant, and this invariance already imposes strong restrictions on the density of sets with few representations. We prove a density theorem in the matched-even case $\ul{b}=(c_1,-c_1,\ldots,c_k,-c_k)$, and then establish lower bounds for arbitrary zero-sum vectors under hypotheses on the growth of the gaps $a_{N+1}-a_N$. In the special case $\ul{b}=(1,-1)$, our results recover the classical Erd\H{o}s phenomenon \eqref{erdos-liminf}.

 \begin{thm}\label{MTme}
  Let $k\geq 1$ and let $\ul{b}=(c_1,-c_1,\ldots,c_k,-c_k)\in \Z^{2k}$ with $c_i\neq 0$ for all $i$. Let $A\subseteq \N$, and let $r_{A,\ul{b}}(n)$ be defined by \eqref{def-rp}.
  \begin{enumerate}[label=\textnormal{(\roman*)}]
   \item \label{me-i} If $\dfrac{A(x)}{(x/\log x)^{1/2k}}\to\infty$, then $\displaystyle \frac{1}{x}\sum_{|n|\leq x} r_{A,\ul{b}}(n)\to\infty$.\smallskip
   \item \label{me-ii} If $A(x)\gg x^{1/2k}$, then $\displaystyle \frac{1}{x}\sum_{|n|\leq x} r_{A,\ul{b}}(n)\gg \log x$.
  \end{enumerate}
 \end{thm}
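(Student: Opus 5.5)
The plan is a dyadic-scale second-moment argument that counts differences, in the spirit of Erdős's proof of \eqref{erdos-liminf}. Write $\ul{b}=(c_1,-c_1,\dots,c_k,-c_k)$. Then
\[ \sum_{i=1}^k c_i(x_{2i-1}-x_{2i})=n, \]
so $r_{A,\ul{b}}$ counts representations of $n$ by the weighted sum of $k$ differences of distinct elements. The averaged quantity $\frac1x\sum_{|n|\le x}r_{A,\ul{b}}(n)$ is, up to the distinctness constraint, $\frac1x$ times the number of $2k$-tuples with $\bigl|\sum c_i(x_{2i-1}-x_{2i})\bigr|\le x$. The $2k$-fold sumset of a set counted with these weights has size at most about $C^{2k}M^{2k}$ in a window of length $M$. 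Hence a Cauchy--Schwarz (energy) bound forces many coincidences once the block is dense.

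Concretely, for a scale $y$ and block length $M=y^{1/2k}$-ish, cut $[1,y]$ into intervals $I$ of length $L$ (to be chosen). For each $I$ set $m_I=|A\cap I|$. If all coordinates lie in the same interval $I$, then $|n|\le CkL$ with $C=\max|c_i|$. The number of such tuples is $\gg m_I^{2k}-O(m_I^{2k-1})$, the diagonal loss coming from the distinctness condition. Summing over intervals and using convexity (Hölder) gives
\[ \sum_{|n|\le CkL}r(n)\gg \sum_I m_I^{2k}\ge (y/L)^{1-2k}A(y)^{2k} \]
minus diagonals. Choosing $L$ comparable to $x$, with $y$ much larger than $x$, gives
\[ \frac1x\sum_{|n|\le x}r(n)\gg \frac{A(y)^{2k}}{y}\Bigl(\frac{x}{y}\Bigr)^{2k-2}, \]
which alone is too weak. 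This is why the next, multiscale, step is needed.

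To gain the logarithm, I would use all dyadic scales $x<2^j\le$ something. Only tuples with $|n|\le x$ count, but the translation invariance lets me localize at every scale. Instead of single intervals, I would count tuples whose pairs $(x_{2i-1},x_{2i})$ lie in the same interval of length $\approx x/(Ck)$ placed around each point $t\le X$. Averaging over the translate $t$ gives
\[ \sum_{|n|\le x}r(n)\ \gtrsim\ \frac{1}{x}\int_0^{X}\bigl(A(t+x/Ck)-A(t)\bigr)^{2k}\,dt, \]
after reorganizing, using $\bigl(\#\{\text{pairs with difference}\le x\}\bigr)^k$ against $\sum r$. By Hölder this is
\[ \ge \frac{(x\,A(X)/X)^{2k}X}{x}. \]
The log gain comes from splitting $[1,X]$ into dyadic ranges $[2^j,2^{j+1}]$ and applying Hölder separately in each, so that
\[ \sum_{|n|\le x}r(n)\gg x^{2k-1}\sum_j \frac{A(2^{j+1})-A(2^j)}{2^{j}}\cdot\Bigl(\frac{A(2^j)}{2^{j}}\Bigr)^{2k-1}2^j\,\text{-type terms}. \]
Actually, I should take the $k$-th power of pair counts per dyadic block, weighted by $2^{-j(2k-1)}$. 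Under $A(t)\gg t^{1/2k}$, each dyadic block $2^j\in[x, x^{2}]$-ish contributes $\gg x$, and there are $\gg\log x$ such blocks, which gives (ii). The distinctness constraint costs only lower-order terms, since $m\ge 2k$ elements per block is ensured once the density is large.

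For (i), the same inequality applies with $A(t)\ge \omega(t)(t/\log t)^{1/2k}$. Each block then contributes $\gg x\,\omega^{2k}/\log 2^j$, and summing over the $\asymp\log x$ relevant blocks gives $\gg x\,\omega^{2k}\to\infty$ times $x$.

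The main obstacle is the middle step: making the multiscale lower bound rigorous. The point is that blocks at different dyadic scales $2^j$ must each contribute order $x$ to the count with $|n|\le x$ without being overcounted. The counted tuples at scale $2^j$ are the ones whose coordinates all lie in $[2^j,2^{j+1}]$, so they are disjoint across $j$. What remains is to verify that Hölder within a block yields $\gg x\,(A\text{-increment})^{2k}/2^{j}$, which needs $A(2^{j+1})-A(2^j)\gg A(2^j)$ on many $j$. I would handle this by a pigeonhole over $j$, passing to a positive proportion of scales where the increments are comparable to $A(2^{j+1})$. This is the step I expect to need care, as in Chen's treatment of $B_{2k}$.
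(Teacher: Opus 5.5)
\textbf{There is a genuine gap at the multiscale step, and it is not the pigeonhole over increments.} For $k\geq 2$, the localization you use is too lossy.

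Forcing all $2k$ coordinates into a common window of length $\asymp x$ only captures tuples whose individual differences are small. So does forcing each pair $(x_{2i-1},x_{2i})$ to be $x$-close. Take a block of $M$ elements spread over an interval of length $Y\geq x$. Then:
\begin{itemize}
\item the common-window count is about $M^{2k}(x/Y)^{2k-1}$;
\item the pairwise count is about $(M^2x/Y)^k$;
\item the number of tuples with $\bigl|\sum_i c_i(x_{2i-1}-x_{2i})\bigr|\leq x$ is $\gg M^{2k}x/Y$.
\end{itemize}
With $M^{2k}\asymp Y$, your per-block bound becomes $x(x/Y)^{2k-2}$, or $x(x/Y)^{k-1}$ in the pairwise version. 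Your own display, $x^{2k-1}(A(2^{j+1})-A(2^j))^{2k}/2^{j(2k-1)}$, is exactly the first of these. It decays geometrically in $j$ once $k\geq 2$.

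Consequences:
\begin{itemize}
\item Summing over dyadic $2^j\geq x$ gives $O(x)$, not $x\log x$.
\item In (i) the same computation gives only about $x\,\omega^{2k}/\log x$, which can be $o(x)$.
\item The claim that each dyadic block in a range of $\asymp\log x$ scales contributes $\gg x$ holds only for $k=1$. There your argument is Erd\H{o}s's and works.
\end{itemize}

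\textbf{The missing idea is to exploit cancellation between the $k$ differences.} You need to count tuples in which each $x_{2i-1}-x_{2i}$ may be as large as $Y$ while the weighted sum is $O(x)$.

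The paper does this with a non-negative Fej\'er weight. Since $\widehat{\psi_x}\geq 0$, one has $S_{A,\ul{b}}(x;N)\geq\int\widehat{\psi_x}(\alpha)\prod_j|F_N(c_j\alpha)|^2\,d\alpha$. Restrict to $|\alpha|\leq\delta\asymp 1/\max\{x,a_{2N}-a_N\}$. There $|F_N(c_j\alpha)|\gg N$ and $\widehat{\psi_x}(\alpha)\gg x$, which gives $S_{A,\ul{b}}(x;N)\gg xN^{2k}/\max\{x,a_{2N}-a_N\}$.

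An equivalent elementary route is close to your Cauchy--Schwarz instinct, applied to the $k$-fold forms rather than to the elements:
\begin{itemize}
\item The values $s(\mathbf{u})=\sum_i c_iu_i$, $\mathbf{u}\in B^k$, lie in an interval of length $\ll\mathrm{diam}(B)$.
\item Cut that interval into $\ll\max\{x,\mathrm{diam}(B)\}/x$ pieces of length $x$.
\item Cauchy--Schwarz gives $\gg M^{2k}x/\max\{x,\mathrm{diam}(B)\}$ pairs $(\mathbf{u},\mathbf{v})$ in a common piece.
\item Each such pair gives a $2k$-tuple with $|s(\mathbf{u})-s(\mathbf{v})|\leq x$.
\end{itemize}

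Also index the blocks by rank, $\{a_{N+1},\dots,a_{2N}\}$ with $N=2^m\in[x^{1/2k},x^{(1-\eps)/(2k-1)}]$, as the paper does. Then each block has exactly $N$ elements and diameter at most $a_{2N}\ll N^{2k}$ (resp.\ $o(N^{2k}\log N)$ in (i)), so no pigeonhole over value-increments is needed. The diagonal loss is $O(N^{2k-1})=O(x^{1-\eps})$. Each of the $\asymp\log x$ blocks then contributes $\gg x$ in (ii), and $\gg x\min\{1,\xi(N)/\log N\}$ in (i).
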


 Chen \cite{che93} proved the even-order analogue of \eqref{erdos-liminf}: if $A\subseteq \N$ is a $B_{2k}$-sequence, then
 \begin{equation}
  \liminf_{x\to\infty}\frac{A(x)}{(x/\log x)^{1/2k}}<\infty. \label{chen}
 \end{equation}
 A key input in \cite{che93} is that any $B_{2k}$-sequence is a weak $B_{2k}[(k!)^2]$-set with respect to the alternating zero-sum vector
 \[ \ul{b}=\underbrace{(1,-1,1,-1,\ldots,1,-1)}_{2k\text{ entries}}. \]
 Thus $\sum_{|n|\leq x} r_{A,\ul{b}}(n)\ll x$, and Theorem \ref{MTme} \ref{me-i} implies \eqref{chen} by contraposition. The proof of Theorem \ref{MTme} is based on a lower bound on each dyadic block $\{a_{N+1},\ldots,a_{2N}\}$: a non-negative Fourier integral shows that many $2k$-tuples from this block have $\sum b_i u_i$ lying in a short interval around $0$. Summing these bounds over $N$ in a suitable range yields both parts of the theorem.

 Our second result applies to arbitrary zero-sum vectors, where we obtain lower bounds from hypotheses on the growth of the gaps $a_{N+1}-a_N$.

 \begin{thm}\label{MTbdd}
  Let $h\geq 3$ and let $\ul{b}=(b_1,\ldots,b_h)\in \Z^h$ satisfy $\sum_{i=1}^h b_i=0$ and $b_i\neq 0$ for all $i$. Let $A\subseteq \N$, and let $r_{A,\ul{b}}(n)$ be defined by \eqref{def-rp}.
  \begin{enumerate}[label=\textnormal{(\roman*)}]
   \item \label{bdd-i} If $a_{N+1}-a_N=o(N^{h-1}\log N)$, then $\displaystyle \frac{1}{x}\sum_{|n|\leq x} r_{A,\ul{b}}(n)\to\infty$.\smallskip
   \item \label{bdd-ii} If $a_{N+1}-a_N\ll N^{h-1}$, then $\displaystyle \frac{1}{x}\sum_{|n|\leq x} r_{A,\ul{b}}(n)\gg \log x$.
  \end{enumerate}
 \end{thm}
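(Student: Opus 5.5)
Since $\ul{b}$ is zero-sum, $\sum_i b_i x_i$ is translation-invariant, and that is the lever we will pull: tuples taken from a short block of consecutive elements of $A$ produce values $\sum b_i x_i$ that are small in absolute value. Concretely, fix a block $B_N=\{a_{N+1},\ldots,a_{2N}\}$ (or a block of any chosen length $M$ ending at index $N$) and let $D_N$ denote its diameter.

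\textbf{Step 1: small values from one block.} For any $h$-tuple of pairwise distinct elements of $B_N$,
\[ \Bigl|\sum_i b_i x_i\Bigr| = \Bigl|\sum_i b_i (x_i - a_{N+1})\Bigr| \le \|\ul{b}\|_1 D_N. \]
So each block contributes about $N^h$ ordered distinct tuples, all landing in $|n|\le \|\ul{b}\|_1 D_N$. The gap hypothesis gives $D_N\le \sum_{j=N}^{2N} (a_{j+1}-a_j)$.
\begin{itemize}
\item Under (ii), this yields $D_N\ll N\cdot N^{h-1}=N^h$.
\item Under (i), it yields $D_N=o(N^h\log N)$.
\end{itemize}
One block alone therefore gives only $\sum_{|n|\le x}r(n)\gg x$ with $x\asymp N^h$. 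This is the trivial bound, and the logarithm has to come from summing over many blocks.

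\textbf{Step 2: many scales and disjoint tuples.} Tuples coming from different blocks are distinct as tuples, so a single $n$ may legitimately be counted by tuples from several blocks. We therefore sum over many block lengths, not just dyadic ones. For a target $x$ and each index $m$ with $a_m$ in a suitable range, consider all tuples whose entries lie in a window $\{a_m,\dots,a_{m+L}\}$ with $L$ chosen so that the window diameter is at most $x/\|\ul{b}\|_1$. To avoid overcounting, classify each tuple by its minimal index $m$ and its maximal index $m'$. The tuple is counted once it satisfies $|\sum b_ix_i|\le \|\ul{b}\|_1 (a_{m'}-a_m)\le x$. Hence
\[ \sum_{|n|\le x} r(n)\ \ge\ h!\,\#\{\text{index sets }m=i_1<\dots<i_h=m' \text{ with } a_{m'}-a_m\le x/\|\ul{b}\|_1\}. \]
For each pair $(m,m')$ with $d=m'-m$ there are $\gg d^{h-2}$ choices of the middle indices.

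\textbf{Step 3: the counting estimate.} Under (ii), for $m\le M$ we have $a_{m+d}-a_m\ll d\,(m+d)^{h-1}$. Take all $m$ up to a size $M$ and $d\le \delta M$, where the window condition becomes $dM^{h-1}\ll x$, i.e. $d\ll x/M^{h-1}$. The count is then
\[ \sum_m \sum_{d\le \min(\delta m,\, x/m^{h-1})} d^{h-2} \;\gg\; \sum_m \min(m,\, x/m^{h-1})^{h-1}. \]
The terms with $m\le x^{1/h}$ give $\sum_m m^{h-1}\asymp x$, which is still only linear. The logarithm must instead come from the range $m\ge x^{1/h}$, where the min is $x/m^{h-1}$ and each term is $(x/m^{h-1})^{h-1}$. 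Summing that decays too fast.

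So the plain counting over $m$ fails, and this is the main obstacle: obtaining the $\log x$ requires exploiting all scales, for instance by summing over $n$ up to $x$ with weight, or by counting the elements $x_1,\ldots,x_h$ as having indices $i_1\le\cdots$ that need not be close. The alternative I would try first is the following. The first $K$ elements $a_1,\dots,a_K$ have $a_K\ll K^h$. Consider tuples where $x_h$ is any element of $A$ up to index $K$ and the others are arbitrary. Split at the dyadic index of the largest entry: for the largest index $j\in(J,2J]$, pair it with another index in $(J-d,J]$ whose difference contributes at most $d\,J^{h-1}$. Choose the remaining $h-2$ indices freely below $J$, but use zero-sum invariance relative to the \emph{pair} rather than the whole tuple, so that the remaining entries are controlled differently.

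I expect the correct route to mirror Theorem \ref{MTme}: a positive-definite Fourier kernel shows that a block of size $N$ produces $\gg N^h/(D_N/y)$ tuples with $|\sum b_ix_i|\le y$ for every $y\le D_N$, not merely for $y\asymp D_N$. With $y=x$ fixed and blocks $N$ ranging over all dyadic scales with $D_N\ge x$ (i.e. $N\gtrsim x^{1/h}$ under (ii)), each dyadic block contributes $\gg N^h\cdot x/D_N\gg x$. Here $N$ can range up to the point where the cutoff $y\le$ reasonable holds, giving $\asymp\log x$ dyadic blocks and total $\gg x\log x$. Under (i), $D_N=o(N^h\log N)$ gives per-block contributions $\gg x/o(\log N)$, and summing over $\asymp\log x$ scales gives $x\cdot\omega(1)$. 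The hard step is establishing this Fourier lower bound for general zero-sum $\ul{b}$ rather than the matched case, where the Fourier kernel $\prod_i \widehat{1_{B}}(b_i t)$ is not automatically nonnegative. I would handle this by Hölder, bounding $\int \prod|\widehat{1_B}(b_it)|\,K(t)\,dt$ from below via the nonnegative kernel $K$ and the trivial mass at $t$ near $0$ (where all factors are $\approx N$), using $|t|\lesssim 1/D_N$. This gives $\gg N^h/D_N\cdot y$ provided the kernel is supported there, and it is exactly where the gap hypothesis, rather than only density, is needed.
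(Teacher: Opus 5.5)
\paragraph{There is a genuine gap.} Your Step 3 fails, as you note yourself, and the ``alternative'' is only a sketch. The Fourier route you settle on needs a per-block bound $S(y;N)\gg N^h y/D_N$ for every $y\le D_N$, using only the diameter. For non-matched $\ul{b}$ this bound cannot be obtained as you indicate, and it is in fact false.

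Without the pairing $(c_j,-c_j)$, the $\psi$-weighted count is $\int_0^1\widehat{\psi_y}(\alpha)\prod_i F_N(b_i\alpha)\,\mathrm{d}\alpha$, whose integrand is complex-valued and changes sign. So the range $|\alpha|>\delta$ cannot be discarded. H\"older and absolute values give only \emph{upper} bounds, and a lower bound for $\int K\prod_i|\widehat{1_B}(b_it)|$ says nothing about the signed integral.

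For a counterexample take $\ul{b}=(1,1,-2)$. Let $J\subseteq[0,M)$ be the integers whose ternary digits lie in $\{0,1\}$, so that $|J|\gg M^{\log 2/\log 3}$ and $j_1+j_2=2j_3$ forces $j_1=j_2=j_3$. Spread the $N$ block elements evenly over the intervals $[4yj,\,4yj+y/4]$, $j\in J$. Then $|u_1+u_2-2u_3|\le y$ forces all three points into a single cluster. Hence there are at most $N^3/|J|^2$ such triples, which is $o(N^3y/D_N)$, because $N^3y/D_N\asymp N^3/M$ and $M/|J|^2\to 0$. Taking $D_N\asymp N^3$ and $y=N^{2+\eta}$, with $0<\eta<1$, places this exactly in the range of $x$ and $N$ you use.

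Relatedly, your route sees only $D_N$, which under (ii) is already controlled by density alone. If it worked, it would prove the density version for every zero-sum $\ul{b}$, which the paper states only as a conjecture. The gap hypothesis has to be used \emph{locally}, and your argument never does so.

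\paragraph{The missing idea is the paper's sliding construction (Lemmas \ref{sliding-claim} and \ref{dyadic-sliding}).} Write $\ul{b}=(p_1,\dots,p_r,-q_1,\dots,-q_s)$ with $\sum p_i=\sum q_j=P$. Inside $[N,2N]$, fix the positive indices $N+L+t_i$ and translate all negative indices $N+\ell+u_j$ by a common shift $\ell$. The spacings $t_i,u_j$ come from $h-2$ free parameters in $[N/4h,N/2h]$.

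The function $F(\ell)=\sum_i p_ia_{N+L+t_i}-\sum_j q_ja_{N+\ell+u_j}$ then has three properties:
\begin{enumerate}
 \item it is strictly decreasing, with steps of at most $P\widetilde{\Delta}_N$;
 \item the zero-sum condition makes it change sign on $[0,K]$;
 \item a discrete intermediate-value argument therefore gives $\gg\min\{x/(P\widetilde{\Delta}_N),N\}$ values of $\ell$ with $|F(\ell)|\le x$.
\end{enumerate}

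With $\asymp N^{h-2}$ choices of spacings, under (ii) this yields $\gg x/N$ distinct tuples for every $N\in[x^{1/h},x^{(1-\eps)/(h-1)}]$. This holds for all such $N$, not only dyadic ones, because $N$ can be recovered from the tuple. Summing $x/N$ gives $\gg x\log x$, and the same computation gives $x\cdot\omega(1)$ under (i).

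Your Step 3 failed because it required all entries to lie in a window of diameter at most $x$. In the paper the entries are spread over $\asymp N$ indices, and smallness comes from locating the sign change of a monotone path, not from the window being short.
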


 By contraposition, Theorem \ref{MTbdd} \ref{bdd-i} implies that if $A$ is a weak $B_h[g]$-set with respect to some zero-sum $\ul{b}$, then
 \begin{equation}
  \limsup_{N\to\infty}\frac{a_{N+1}-a_N}{N^{h-1}\log N}>0. \label{gap-obstruction}
 \end{equation}
 The proof of Theorem \ref{MTbdd} is based on a combinatorial lemma. Assuming control of the gaps on a block $[N,2N]$, we construct many representations with $\sum b_i u_i$ in $[-x,x]$, leading to a lower bound in terms of the local maximum of the gaps on that block. Estimating the resulting sum over $N$ then yields both parts of Theorem \ref{MTbdd}.

 The hypotheses in Theorem \ref{MTbdd} are consistent with the density thresholds suggested by Theorem \ref{MTme}: the condition $a_{N+1}-a_N=o(N^{h-1}\log N)$ implies $A(x)/(x/\log x)^{1/h}\to\infty$, while $a_{N+1}-a_N\ll N^{h-1}$ implies $A(x)\gg x^{1/h}$. This suggests the conjecture that a version of Theorem \ref{MTme} should hold for all zero-sum vectors $\ul{b}$.

 Finally, the zero-sum hypothesis is essential. When $\sum_{i=1}^h b_i\neq 0$, the linear form $\sum_{i=1}^h b_i x_i$ is no longer translation-invariant, and the conclusions of Theorems \ref{MTme} and \ref{MTbdd} can fail. For example, when $\ul{b}=(1,1)$ and $A=\{n^2\}_{n\geq 1}$, one has $a_{N+1}-a_N\ll N$ but $\sum_{n\leq x} r_{A,(1,1)}(n)=O(x)$. There also exist additive bases $A\subseteq \N$ of order $2$ (so $A(x)\gg x^{1/2}$) with $\sum_{n\leq x} r_{A,(1,1)}(n)=O(x)$ (see Nathanson \cite{nat12}), and even with bounded second moment $\sum_{n\leq x} r_{A,(1,1)}(n)^2=O(x)$ via a construction of Ruzsa \cite{ruz90}. Whether $A(x)\gg x^{1/2}$ forces $r_{A,(1,1)}(n)$ to be unbounded remains open.
 
%%%%%%%%%%%%%%%%%%%%%%%%%%%%%%%%%%%%%%%%%%%
\section{Matched-even case}
 Let $A\subseteq \N$, and let $\ul{b}=(b_1,\dots,b_h)\in \Z^h$ be fixed. For $N\geq 1$ and $x\geq 1$, define
 \[ S_{A,\ul{b}}(x;N):=\#\bigg\{(u_1,\dots,u_h)\in \{a_{N+1},\dots,a_{2N}\}^h ~\bigg|~ \bigg|\sum_{i=1}^h b_i u_i\bigg|\leq x\bigg\}. \]
 Let $S^*_{A,\ul{b}}(x;N)$ denote the same quantity, but with the additional restriction that $u_i\neq u_j$ for $i\neq j$. Since the sets $\{a_{2^m+1},\dots,a_{2^{m+1}}\}$ are pairwise disjoint, we have
 \begin{equation}
  \sum_{|n|\leq x} r_{A,\ul{b}}(n)\geq \sum_{m\geq 0} S^*_{A,\ul{b}}(x;2^m). \label{lbSab}
 \end{equation}
 Moreover, tuples with at least one repeated coordinate contribute at most $O(N^{h-1})$, and hence
 \begin{equation}
  S_{A,\ul{b}}(x;N) = S^*_{A,\ul{b}}(x;N) + O(N^{h-1}). \label{ssstr}
 \end{equation}

 The next lemma gives a lower bound for $S_{A,\ul{b}}(x;N)$ in the matched-even case.

 \begin{lem}\label{matched-shell}
  Let $k\geq 1$ and $\ul{b}=(c_1,-c_1,\dots,c_k,-c_k)$ with $c_i\in \Z\setminus\{0\}$. For every $N\geq 1$ and $x\geq 1$, we have
  \[ S_{A,\ul{b}}(x;N)\gg_{\ul{b}} \frac{xN^{2k}}{\max\{x,\,a_{2N}-a_N\}}. \]
 \end{lem}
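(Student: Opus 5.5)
Write $B=\{a_{N+1},\dots,a_{2N}\}$, so $|B|=N$ and $B$ lies in an interval of length $L:=a_{2N}-a_N$. Set $M:=\max\{x,L\}$. We detect the condition $|\sum b_iu_i|\le x$ with a non-negative Fourier kernel. Let $K(t)$ be a Fejér-type function: take $\delta\asymp 1/x$ and $K(t)=\max\{0,1-|t|/\tfrac{x}{2}\}$ (a triangle supported on $[-x/2,x/2]$). Then $K\le \mathbf 1_{[-x,x]}$ and $K(t)=\int_{\R}\hat K(\theta)e(\theta t)\,d\theta$ with $\hat K(\theta)=\tfrac{x}{2}\bigl(\tfrac{\sin(\pi x\theta/2)}{\pi x\theta/2}\bigr)^2\ge0$. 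With $f(\theta)=\sum_{u\in B}e(\theta u)$, the matched structure of $\ul b$ gives
\[
S_{A,\ul b}(x;N)\ \ge\ \sum_{u\in B^{2k}}K\Bigl(\sum_i b_iu_i\Bigr)=\int_{\R}\hat K(\theta)\prod_{j=1}^k |f(c_j\theta)|^2\,d\theta .
\]
The integrand is non-negative. This is where the choice $\ul b=(c_1,-c_1,\dots,c_k,-c_k)$ is used: each pair $(c_j,-c_j)$ contributes $f(c_j\theta)\overline{f(c_j\theta)}$.

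Next, restrict the integral to a small neighbourhood of $0$. Let $C:=\max_j|c_j|$ and consider $|\theta|\le \eta/(CM)$ with a small absolute constant $\eta$. Write $u=a_{N+1}+v$ with $0\le v\le L$. Then $|f(c_j\theta)|=\bigl|\sum_v e(c_j\theta v)\bigr|$, and every phase $c_j\theta v$ has absolute value at most $\eta L/M\le\eta$. Taking real parts, $|f(c_j\theta)|\ge N\cos(2\pi\eta)\ge N/2$ for $\eta\le 1/6$. On the same range $|x\theta|\le \eta/C\le 1/6$, so $\hat K(\theta)\gg x$. Integrating over an interval of length $\asymp 1/(CM)$ gives
\[
S_{A,\ul b}(x;N)\gg x\cdot N^{2k}2^{-2k}\cdot\frac{1}{CM}\gg_{\ul b}\frac{xN^{2k}}{\max\{x,\,a_{2N}-a_N\}} .
\]
This is the claimed bound.

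The main point of care is making the kernel simultaneously satisfy two requirements: it must be bounded by the indicator of $[-x,x]$, and its Fourier transform must be non-negative and of size $\gg x$ on a window of width $\gg 1/M$. The triangle kernel meets both. It is also important that the shift $a_{N+1}$ drops out. The translation is harmless because $\sum b_i=0$ (here each pair cancels), and taking absolute values of $f$ removes the shift anyway. Everything else is bookkeeping of constants depending only on $k$ and $C$.
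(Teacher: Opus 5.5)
Your proof is correct and follows the paper's argument. You minorize $\mathbbm{1}_{[-x,x]}$ by a triangle kernel with non-negative Fourier transform, use the matched pairs to obtain $\prod_j |f(c_j\theta)|^2$, and restrict to the window $|\theta|\ll 1/(C\max\{x,a_{2N}-a_N\})$, where each factor is $\gg N$ and the kernel's transform is $\gg x$; the only difference is cosmetic, since you use the continuous triangle on $\R$ (squared-sinc transform) instead of the paper's discrete $\psi_x$ integrated over $[0,1]$, which spares you rounding $x$ to an integer.
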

  \begin{proof}
  Replacing $x$ by $\lfloor x\rfloor$, we may assume that $x$ is an integer. For $N<n\leq 2N$, let $y_n := a_n-a_N$, and define
  \[ F_N(\alpha) := \sum_{n=N+1}^{2N} e(y_n\alpha), \qquad e(\alpha):=e^{2\pi i\alpha}. \]
  For $t\in \Z$, define
  \[ \psi_x(t) := \max\bigg\{1-\frac{|t|}{x+1},\,0\bigg\}, \]
  so that $\psi_x(t)\leq \mathbbm{1}_{[-x,x]}(t)$, where $\mathbbm{1}_{[-x,x]}(t)=1$ if $|t|\leq x$ and $0$ otherwise. By orthogonality, we obtain
  \begin{align*}
   S_{A,\ul{b}}(x;N)
   &= \sum_{n_1,m_1=N+1}^{2N}\cdots\sum_{n_k,m_k=N+1}^{2N} \mathbbm{1}_{[-x,x]}\bigg(\sum_{j=1}^{k} c_j(a_{n_j}-a_{m_j})\bigg) \\
   &\geq \sum_{n_1,m_1=N+1}^{2N}\cdots\sum_{n_k,m_k=N+1}^{2N} \psi_x\bigg(\sum_{j=1}^{k} c_j(y_{n_j}-y_{m_j})\bigg) \\
   &= \int_{0}^{1} \widehat{\psi_x}(\alpha) \sum_{n_1,m_1=N+1}^{2N}\cdots\sum_{n_k,m_k=N+1}^{2N} e\bigg(-\alpha\sum_{j=1}^{k} c_j(y_{n_j}-y_{m_j})\bigg)\,\mathrm{d}\alpha \\
   &= \int_{0}^{1}\widehat{\psi_x}(\alpha)\prod_{j=1}^{k}|F_N(c_j\alpha)|^2\,\mathrm{d}\alpha,
  \end{align*}
  where $\widehat{\psi_x}(\alpha) := \sum_{t\in\Z}\psi_x(t)\,e(t\alpha)$. Since
  \[ \widehat{\psi_x}(\alpha) = \sum_{t=-x}^{x} \psi_x(t)\,e(t\alpha) = \frac{1}{x+1}\sum_{t=-x}^{x}(x+1-|t|)\,e(t\alpha) = \frac{1}{x+1}\left|\sum_{m=0}^{x}e(m\alpha)\right|^2\geq 0, \]
  the integrand is real and non-negative. Hence, for any $0<\delta\leq 1/2$,
  \begin{equation}
   S_{A,\ul{b}}(x;N)\geq \int_{-\delta}^{\delta}\widehat{\psi_x}(\alpha)\prod_{j=1}^{k}|F_N(c_j\alpha)|^2\,\mathrm{d}\alpha. \label{Sbdelta}
  \end{equation}

  Let $C:=\max_{1\leq j\leq k}|c_j|$, and choose
  \[ \delta := \frac{1}{12C\max\{x,\,a_{2N}-a_N\}}. \]
  If $|\alpha|\leq \delta$, then for every $1\leq j\leq k$ and every $N<n\leq 2N$,
  \[ |2\pi c_j y_n\alpha|\leq 2\pi C\,(a_{2N}-a_N)\,\delta\leq \frac{\pi}{6}, \]
  and therefore
  \[ |F_N(c_j\alpha)|\geq \Re F_N(c_j\alpha)\geq \cos(\pi/6)\,N\gg N. \]
  Also, if $|\alpha|\leq \delta$, then for every $0\leq m\leq x$,
  \[ |2\pi m\alpha|\leq 2\pi x\,\delta\leq \frac{\pi}{6}, \]
  so
  \[ \left|\sum_{m=0}^{x}e(m\alpha)\right|\geq \cos(\pi/6)\,(x+1)\gg x, \]
  and hence $\widehat{\psi_x}(\alpha)\gg x$.

  It follows from \eqref{Sbdelta} that
  \[ S_{A,\ul{b}}(x;N)\gg \delta\,x\,N^{2k}\gg_{\ul{b}} \frac{xN^{2k}}{\max\{x,\,a_{2N}-a_N\}}. \qedhere \]
 \end{proof}

 With Lemma \ref{matched-shell} in hand, the proof of Theorem \ref{MTme} is obtained by summing the lower bound \eqref{lbSab} over a suitable range of $N$.

%%%%%%%%%%%%%%%%%
\subsection{Proof of Theorem \ref{MTme} \ref{me-i}}
 The hypothesis $A(x)/(x/\log x)^{1/2k} \to \infty$ is equivalent to $a_N = o(N^{2k}\log N)$. This means there exists some non-decreasing function $\xi(x)\to \infty$ such that
 \[ a_N\leq \frac{N^{2k}\log N}{\xi(N)}. \]
 Fix $0<\eps<1/2k$, and let $x$ be large. For each $N=2^m$ in the range
 \[ x^{1/2k}\leq N\leq x^{(1-\eps)/(2k-1)}, \]
 we have $x\leq N^{2k}$, and so \eqref{ssstr} together with Lemma \ref{matched-shell} gives
 \begin{align*}
  S^{*}_{A,\ul{b}}(x;N) &\gg \frac{xN^{2k}}{\max\{x,\,a_{2N}-a_N\}}-O(N^{2k-1}) \\
  &\geq \frac{xN^{2k}}{\max\{N^{2k},\,a_{2N}\}}-O(N^{2k-1}) \\
  &\gg x \min\bigg\{1,\frac{\xi(N)}{\log N}\bigg\}-O(N^{2k-1}),
 \end{align*}
 since $\xi$ is non-decreasing. Summing over all $N=2^m$ in this range and using \eqref{lbSab}, we obtain
 \[ \sum_{|n|\leq x} r_{A,\ul{b}}(n)\gg x\sum_{\substack{x^{1/2k}\leq N\leq x^{(1-\eps)/(2k-1)} \\ N=2^m}} \min\bigg\{1,\frac{\xi(N)}{\log N}\bigg\} - O\Bigg(\sum_{\substack{x^{1/2k}\leq N\leq x^{(1-\eps)/(2k-1)} \\ N=2^m}} N^{2k-1}\Bigg). \]
 
 The error term is $O(x^{1-\eps})$, since the summation is over $N=2^m$ and the largest value of $N^{2k-1}$ is $O(x^{1-\eps})$. Also, because $\eps<1/2k$, the interval above contains $\asymp \log x$ such values of $N$, and for each such $N$ we have $\log N\asymp \log x$ and $\xi(N)\geq \xi(x^{1/2k})$. Hence
 \[ \sum_{\substack{x^{1/2k}\leq N\leq x^{(1-\eps)/(2k-1)} \\ N=2^m}} \min\bigg\{1,\frac{\xi(N)}{\log N}\bigg\} \gg \log x \,\min\bigg\{1,\frac{\xi(x^{1/2k})}{\log x}\bigg\}. \]
 Since $\xi(x^{1/2k})\to\infty$, the right-hand side tends to infinity, and therefore
 \[ \frac{1}{x}\sum_{|n|\leq x} r_{A,\ul{b}}(n)\to\infty, \]
 proving \ref{me-i}. \hfill$\square$

%%%%%%%%%%%%%%%%%%%%%%%%%%%
\subsection{Proof of Theorem \ref{MTme} \ref{me-ii}}
 The hypothesis $A(x)\gg x^{1/2k}$ is equivalent to $a_N\ll N^{2k}$, and therefore $a_{2N}-a_N\ll N^{2k}$. Fix $0<\eps<1/2k$, and let $x$ be large. For each $N=2^m$ in the range
 \[ x^{1/2k}\leq N\leq x^{(1-\eps)/(2k-1)}, \]
 we have $x\leq N^{2k}$ and $N^{2k-1}\leq x^{1-\eps}$. Hence \eqref{ssstr} together with Lemma \ref{matched-shell} gives
 \[ S^*_{A,\ul{b}}(x;N)\gg \frac{xN^{2k}}{\max\{x,\,a_{2N}-a_N\}}-O(N^{2k-1})\gg x-O(x^{1-\eps})\gg x. \]
 Because $\eps<1/2k$, the interval above contains $\asymp \log x$ values of $N = 2^m$. Summing over these values and using \eqref{lbSab}, we obtain
 \[ \sum_{|n|\leq x} r_{A,\ul{b}}(n)\gg x\sum_{\substack{x^{1/2k}\leq N\leq x^{(1-\eps)/(2k-1)} \\ N=2^m}} 1\gg x\log x, \]
 proving \ref{me-ii}. \hfill$\square$

%%%%%%%%%%%%%%%%%%%%%%%%%%%%%%%%%%%%%%%%%%%
\section{Gap conditions}
 Since the case $h=2$ is already covered by the matched-even case, we assume throughout this section that $h\geq 3$. Let $A\subseteq \N$, and let
 \[ \Delta_N:=a_{N+1}-a_N,\qquad \widetilde{\Delta}_N:=\max_{N\leq m\leq 2N}\Delta_m. \]
 The main idea of the proof of Theorem \ref{MTbdd} is that, under suitable control of the gaps on a block $[N,2N]$, one can build many representations with value in $[-x,x]$. We first isolate the following idea: if a function decreases in steps of size at most $D$ and changes sign, then it must remain inside $[-x,x]$ for many consecutive values, provided $x$ is large compared to $D$.
 
 \begin{lem}\label{sliding-claim}
  Let $K,D\geq 1$, and let $F:\{0,\ldots,K\}\to \R$ satisfy
  \[ F(0)>0>F(K), \qquad 0<F(\ell)-F(\ell+1)\leq D \qquad (0\leq \ell<K). \]
  Then for $x\geq D$, the set
  \[ I:=\{0\leq \ell\leq K ~|~ |F(\ell)|\leq x\} \]
  is an interval of integers and $\#I \geq \min\{\lfloor x/D\rfloor, K\} + 1$.
 \end{lem}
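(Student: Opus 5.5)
Since $F$ is strictly decreasing on $\{0,\ldots,K\}$, the plan is to locate the sign change and then walk outward from it in steps of size at most $D$, counting how many consecutive values stay inside $[-x,x]$.

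\emph{Interval structure.} Strict monotonicity means that $\{\ell : F(\ell)\le x\}$ is a final segment of $\{0,\dots,K\}$. Likewise $\{\ell : F(\ell)\ge -x\}$ is an initial segment. Their intersection $I$ is therefore an interval of integers, possibly empty a priori.

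\emph{Nonemptiness.} Let $\ell_0$ be the largest index with $F(\ell_0)>0$. It exists because $F(0)>0$, and $\ell_0<K$ because $F(K)<0$. Then $F(\ell_0+1)\le 0$, so
\[ 0<F(\ell_0)=F(\ell_0+1)+\bigl(F(\ell_0)-F(\ell_0+1)\bigr)\le D\le x. \]
Similarly $F(\ell_0+1)\ge F(\ell_0)-D>-D\ge -x$. Hence both $\ell_0$ and $\ell_0+1$ lie in $I$, so $I$ contains at least two consecutive points straddling zero.

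\emph{Counting.} Set $M:=\lfloor x/D\rfloor\ge 1$. Going left from $\ell_0+1$, for $0\le j\le \ell_0$ we have
\[ F(\ell_0-j)\le F(\ell_0)+jD\le (j+1)D. \]
These values are positive, so they lie in $I$ whenever $(j+1)D\le x$, i.e.\ $j\le M-1$. Going right, for $0\le j\le K-\ell_0-1$ we have
\[ F(\ell_0+1+j)\ge F(\ell_0+1)-jD>-(j+1)D, \]
so these points lie in $I$ whenever $j\le M-1$.

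It follows that $I$ contains
\[ \{\ell_0-\min(M-1,\ell_0),\;\dots,\;\ell_0+1+\min(M-1,\,K-\ell_0-1)\}. \]
Writing $a=\ell_0$ and $b=K-\ell_0-1$, this set has size
\[ 2+\min(M-1,a)+\min(M-1,b). \]
We must check that this is at least $\min(M,K)+1$, using $a+b=K-1$.

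\begin{itemize}
\item If both minima equal $M-1$, the size is $2M\ge M+1$.
\item If neither does, the size is $2+a+b=K+1$.
\item If exactly one does, say $\min(M-1,a)=M-1$ and $b<M-1$, the size is $M+1+b\ge M+1$. The symmetric case is identical.
\end{itemize}

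In every case the size is at least $\min(M,K)+1$. The only real care needed is this bookkeeping at the boundary of $\{0,\dots,K\}$; the rest is immediate.
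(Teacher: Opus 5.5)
Your proof is correct and follows the paper's argument. Like the paper, you locate the sign change $F(\ell_0)>0\ge F(\ell_0+1)$ and use the step bound $D$ to walk outward, showing $[\ell_0-M+1,\ell_0+M]\cap[0,K]\subseteq I$. The only difference is the final count: the paper sets $m=\min\{\lfloor x/D\rfloor,K\}$ up front and splits on whether $\ell_0\ge m-1$, while you keep $M=\lfloor x/D\rfloor$ and split on which of the two truncations is active.
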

 \begin{proof}
  Since $F$ is strictly decreasing, $I$ is an interval. Let $v$ be the unique integer with
  \[ F(v)>0\geq F(v+1), \]
  and let $m := \min\{\lfloor x/D \rfloor, K\} \geq 1$. If $0\leq j\leq m-1$ and $v-j\geq 0$, then
  \[ 0<F(v-j)=F(v+1)+\sum_{t=v-j}^{v}(F(t)-F(t+1))\leq (j+1)D\leq mD\leq x. \]
  Likewise, if $0\leq j\leq m-1$ and $v+1+j\leq K$, then
  \[ -x\leq -mD\leq -(j+1)D < F(v)-\sum_{t=v}^{v+j}(F(t)-F(t+1))=F(v+1+j)\leq 0. \]
  Hence every integer in
  \[ [v-m+1,\,v+m]\cap[0,K] \]
  belongs to $I$. If $v\geq m-1$, then $[v-m+1,\,v+1]\subseteq I$, so $\#I\geq m+1$. If $v\leq m-2$, then $[0,m]\subseteq I$, and again $\#I\geq m+1$. This proves the lemma.
 \end{proof}
 
 We now apply Lemma \ref{sliding-claim} to a family of monotone functions attached to each block $[N,2N]$ (see \eqref{funcF}). This will yield the lower bound from which Theorem \ref{MTbdd} follows.

 \begin{lem}\label{dyadic-sliding}
  Let $h\geq 3$, and let $\ul{b} = (b_1,\dots,b_h)\in \mathbb Z^h$ satisfy $\sum_{i=1}^h b_i = 0$ and $b_i\neq 0$ for all $i$. Let $P := \frac{1}{2} \sum_{i=1}^{h} |b_i|$, and for $N\geq 1$, $x\geq 1$ put
  \begin{equation}
   m_N(x):= \min\bigg\{\frac{x}{P\widetilde{\Delta}_N},\, N\bigg\}. \label{defmN}
  \end{equation}
  Then
  \[ \sum_{|n|\leq x} r_{A,\ul{b}}(n)\gg_{h} \sum_{\substack{N\geq 1\\ m_N(x)\geq h^2}} N^{h-2}\,m_N(x). \]
 \end{lem}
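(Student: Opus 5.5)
The plan is to fix $N$ with $m_N(x)\geq h^2$, build $\gg_h N^{h-1}m_N(x)$ distinct ordered $h$-tuples of pairwise distinct elements of $A$ with $|\sum b_iu_i|\leq x$, and then account for the overlap between different $N$.

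\textbf{Setup.} Split $\{1,\dots,h\}=I_+\cup I_-$ according to the sign of $b_i$. Both parts are nonempty, because $\ul{b}$ is zero-sum with nonzero entries, and $\sum_{i\in I_+}b_i=\sum_{i\in I_-}|b_i|=P$. Since $m_N(x)\leq N$, we have $N\geq h^2$, so the intervals below have length $\gg N$ and contain at least $h$ integers.
\begin{itemize}
\item Choose pairwise distinct indices $p_i\in[N,5N/4)$ for $i\in I_+$.
\item Choose pairwise distinct indices $q_j\in[5N/4,3N/2)$ for $j\in I_-$.
\item Set $K:=\lfloor N/2\rfloor$.
\end{itemize}
There are $\gg_h N^{h-1}$ such parameter choices.

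\textbf{The sliding function.} For $0\leq \ell\leq K$ define
\begin{equation}
 F(\ell):=\sum_{j\in I_-}|b_j|\,a_{q_j}-\sum_{i\in I_+}b_i\,a_{p_i+\ell}. \label{funcF}
\end{equation}
All indices $p_i+\ell$ lie in $[N,7N/4]\subseteq[N,2N]$.
\begin{itemize}
\item $F(0)>0$: every $p_i<\min_j q_j$ and the weights on both sides equal $P$.
\item $F(K)<0$: every $p_i+K\geq 3N/2>\max_j q_j$.
\item $F$ is strictly decreasing, with
\[ F(\ell)-F(\ell+1)=\sum_{i\in I_+}b_i\Delta_{p_i+\ell}\leq P\widetilde{\Delta}_N=:D. \]
\end{itemize}
The condition $m_N(x)\geq h^2\geq 1$ forces $x\geq D$. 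Lemma \ref{sliding-claim} then yields at least $\min\{\lfloor x/D\rfloor,K\}\gg m_N(x)$ values of $\ell$ with $|F(\ell)|\leq x$.

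\textbf{Distinctness and the count for fixed $N$.} The corresponding tuple, with $u_i=a_{p_i+\ell}$ for $i\in I_+$ and $u_j=a_{q_j}$ for $j\in I_-$, satisfies $\sum_i b_iu_i=-F(\ell)\in[-x,x]$. Its coordinates are pairwise distinct except possibly when $p_i+\ell=q_j$. Each pair $(i,j)$ excludes at most one $\ell$, so at most $h^2/4$ values are lost. Since $m_N(x)\geq h^2$, we retain $\gg m_N(x)$ good values of $\ell$. Different parameter choices $(p,q,\ell)$ give different index tuples, hence different tuples of elements. So block $N$ produces $\gg_h N^{h-1}m_N(x)$ tuples counted by $\sum_{|n|\leq x}r_{A,\ul{b}}(n)$.

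\textbf{Overlap across $N$ (main obstacle).} The sum ranges over all $N$, not only dyadic ones, so the same tuple can arise from many blocks. This is why the lemma claims $N^{h-2}$ rather than $N^{h-1}$. A tuple produced from block $N$ has all indices in $[N,7N/4]$, so its minimal index $n_0$ satisfies $N\leq n_0$ and $n_0\leq 7N/4$. Hence only $N\in[4n_0/7,n_0]$, i.e.\ $O(n_0)$ values of $N$, can produce it, and each such $N$ is $\asymp n_0$. Weight each tuple produced by block $N$ by $1/(CN)$ for a suitable constant $C$. Every tuple then receives total weight $\ll 1$, so
\[ \sum_{|n|\leq x}r_{A,\ul{b}}(n)\gg\sum_N\frac{N^{h-1}m_N(x)}{N}=\sum_{\substack{N\geq1\\ m_N(x)\geq h^2}}N^{h-2}m_N(x). \]
The step needing the most care is this multiplicity bookkeeping, namely checking that the index window $[N,7N/4]$ is comparable to $N$ so the weights are uniform. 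Confirming the distinctness loss of at most $h^2/4$ values of $\ell$ is routine.
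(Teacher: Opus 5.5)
Your overall plan works and differs from the paper's, but the per-block count rests on a false claim: that different parameter choices $(p,q,\ell)$ give different index tuples. The tuple depends on $(p,\ell)$ only through the positions $p_i+\ell$. Replacing every $p_i$ by $p_i+1$ and $\ell$ by $\ell-1$ gives the same tuple, with the same value of $F$, so it is counted as good twice.

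In fact, since the $p_i$ and $q_j$ range freely over windows of length $\asymp N$, there are $\asymp_h N^{h}$ choices of $(p,q)$, not $N^{h-1}$. Injectivity would then produce $\gg N^{h}m_N(x)$ distinct tuples with all indices in $[N,2N]$, which exceeds $(N+1)^h$ once $m_N(x)$ is large. So the bound $\gg_h N^{h-1}m_N(x)$ for block $N$ is true, but your argument does not prove it: an undercount of parameters silently cancels an overcount of tuples. This is the same multiplicity you flagged as the main obstacle across different $N$, but it already occurs inside a single block.

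The repair is short. Fix some $i_0\in I_+$ and impose $p_{i_0}=N$, with the remaining $p_i$ distinct in $(N,5N/4)$. This leaves $\asymp_h N^{h-1}$ choices. For fixed $N$ the map is now injective: $\ell$ is the index in position $i_0$ minus $N$, and then every $p_i$ is recovered. Equivalently, keep all $\asymp_h N^h$ choices and note that each tuple has at most $K+1\ll N$ preimages. With either fix, your weighting over $N\in[4n_0/7,n_0]$ is correct and yields the lemma.

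There is also a minor slip at the endpoint. For odd $N$ you only have $p_i+K\geq (3N-1)/2$, which can equal $\max_j q_j$. $F(K)<0$ still holds, because $h\geq 3$ forces two distinct indices on one side; alternatively take $K=\lceil N/2\rceil$.

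For comparison, the paper parametrizes by the $h-2$ internal gaps $d_i,e_j\in[N/4h,N/2h]$. It slides the negative block $N+\ell+u_j$ past the positive block placed at $N+L+t_i$, with $L=u_s+1$. It then reads $N$ off from the tuple as the first positive index minus $L$. Hence $(N,\ul{d},\ul{e},\ell)\mapsto$ tuple is injective over all $N$ simultaneously, giving $N^{h-2}m_N(x)$ per $N$ with no averaging step. Your corrected route instead produces $N^{h-1}m_N(x)$ genuinely distinct tuples per block and pays a factor $N$ for the overlap between blocks. Both reach the same bound; the paper's exact injectivity makes the bookkeeping cleaner.
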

 \begin{proof}
  After permuting the coordinates of $\ul{b}$, we may write
  \[ \ul{b}=(p_1,\dots,p_r,-q_1,\dots,-q_s), \]
  where $r,s\geq 1$, $r+s=h$, each $p_i,q_j$ is positive, and $P=p_1+\cdots+p_r=q_1+\cdots+q_s$.

  Fix $N\geq 1$ with $m_N(x)\geq h^2$. Since $m_N(x)\leq N$, this implies $N\geq h^2$, and in particular the interval $[N/4h, N/2h]\cap \Z$ is non-empty. Choose integers
  \[ d_1,\dots,d_{r-1},\ e_1,\dots,e_{s-1}\in [N/4h, N/2h]\cap \Z \]
  (not necessarily pairwise distinct), and write $\ul{d}:=(d_1,\dots,d_{r-1})$ and $\ul{e}:=(e_1,\dots,e_{s-1})$. If $r=1$ there are no $d_i$, and if $s=1$ there are no $e_j$. Define
  \[ t_1:=0,\qquad t_i:=d_1+\cdots+d_{i-1}\quad (2\leq i\leq r), \]
  \[ u_1:=0,\qquad u_j:=e_1+\cdots+e_{j-1}\quad (2\leq j\leq s), \]
  and put
  \[ L:=u_s+1,\qquad K:=L+t_r+1=u_s+t_r+2. \]
  For integers $\ell$ with $0\leq \ell\leq K$, let
  \begin{equation}
   F_{N,\ul{d},\ul{e}}(\ell):=\sum_{i=1}^r p_i a_{N+L+t_i}-\sum_{j=1}^s q_j a_{N+\ell+u_j}, \label{funcF}
  \end{equation}
  and define $I_{N,\ul{d},\ul{e}}(x):=\{0\leq \ell\leq K ~|~ |F_{N,\ul{d},\ul{e}}(\ell)|\leq x\}$.

  We first check that Lemma \ref{sliding-claim} applies to $\ell\mapsto F_{N,\ul{d},\ul{e}}(\ell)$. For $0\leq \ell<K$,
  \begin{equation}
   F_{N,\ul{d},\ul{e}}(\ell)-F_{N,\ul{d},\ul{e}}(\ell+1)=\sum_{j=1}^s q_j(a_{N+\ell+u_j+1}-a_{N+\ell+u_j})>0, \label{diffF}
  \end{equation}
  so this function is strictly decreasing. Also,
  \[ N+L+t_i>N+u_j \qquad (1\leq i\leq r,\ 1\leq j\leq s), \]
  and
  \[ N+K+u_j>N+L+t_i \qquad (1\leq i\leq r,\ 1\leq j\leq s). \]
  Since $p_1+\cdots+p_r=q_1+\cdots+q_s$, it follows that
  \[ F_{N,\ul{d},\ul{e}}(0)>0>F_{N,\ul{d},\ul{e}}(K).\footnotemark \]

  Next\footnotetext{This is the only point at which the zero-sum hypothesis is used. Without the identity $p_1+\cdots+p_r=q_1+\cdots+q_s$, the monotone path $\ell\mapsto F_{N,\ul{d},\ul{e}}(\ell)$ need not cross $0$.}
  we bound the successive differences in \eqref{diffF}. Since each $d_i,e_j$ lies in $[N/4h,N/2h]$, we have $u_s\leq \frac{(s-1)}{2h}N$, $t_r\leq \frac{(r-1)}{2h}N$, and therefore
  \[ K=u_s+t_r+2\leq \frac{(h-2)}{2h}N+2. \]
  Hence every index occurring in \eqref{diffF} is at most
  \begin{align*}
   N+K+u_s+1\leq N+\frac{(h-2)}{2h}N+2+\frac{(s-1)}{2h}N &\leq \frac{(4h-4)}{2h}N + 2 \\
   &< 2N+1,
  \end{align*}
  so all these indices lie in $[N,2N]$. Thus
  \[ 0 < F_{N,\ul{d},\ul{e}}(\ell)-F_{N,\ul{d},\ul{e}}(\ell+1)\leq P\widetilde{\Delta}_N. \]

  We also need a lower bound for $K$. Again from the choice of the $d_i,e_j$,
  \[ K = u_s+t_r+2\geq \frac{(h-2)}{4h}N+2\geq \frac{N}{12}+2\geq \frac{m_N(x)}{12}, \]
  since $h\geq 3$ and $m_N(x)\leq N$. Since $m_N(x)\geq h^2\geq 1$, we also have $x\geq P\widetilde{\Delta}_N$. Therefore Lemma \ref{sliding-claim}, applied with
  \[ F(\ell)=F_{N,\ul{d},\ul{e}}(\ell),\qquad D=P\widetilde{\Delta}_N, \]
  yields
  \begin{equation}
   \#I_{N,\ul{d},\ul{e}}(x) > \min\bigg\{\frac{x}{P\widetilde{\Delta}_N}, K\bigg\}\gg m_N(x). \label{lbImN}
  \end{equation}

  We now remove the values of $\ell$ that produce repeated indices. The positive block is automatically distinct because $t_1<\cdots<t_r$, and the negative block is distinct because $u_1<\cdots<u_s$. Thus a repetition can only occur between one index from the positive block and one from the negative, that is, when $N+L+t_i=N+\ell+u_j$ for some pair $(i,j)$. Equivalently,
  \[ \ell = L+t_i-u_j. \]
  For fixed $(N,\ul{d},\ul{e})$, there are at most $rs\leq h^2/4$ such values of $\ell$. Since $m_N(x)\geq h^2$, \eqref{lbImN} implies
  \[ \#I_{N,\ul{d},\ul{e}}(x)-\frac{h^2}{4}\geq \frac{3}{4}\,\#I_{N,\ul{d},\ul{e}}(x)\gg m_N(x). \]
  Hence, for each fixed $(N,\ul{d},\ul{e})$, there are $\gg m_N(x)$ admissible values of $\ell$ giving pairwise distinct indices.

  Each admissible quadruple $(N,\ul{d},\ul{e},\ell)$ yields an ordered $h$-tuple
  \[ (a_{N+L+t_1},\ldots,a_{N+L+t_r},a_{N+\ell+u_1},\ldots,a_{N+\ell+u_s}) \]
  with pairwise distinct indices and
  \[ \bigg|\sum_{i=1}^r p_i a_{N+L+t_i}-\sum_{j=1}^s q_j a_{N+\ell+u_j}\bigg|\leq x. \]
  Moreover, different quadruples give different ordered $h$-tuples: from the differences between successive indices in the positive and negative blocks one recovers $\ul{d}$ and $\ul{e}$, hence $u_s$ and $L$, then $N$ from the first positive index $N+L$, and finally $\ell$ from the first negative index $N+\ell$.

  Finally, let $\mathcal{P}_N$ denote the set of choices of $(\ul{d},\ul{e})$. Since there are $(r-1)+(s-1)=h-2$ independent parameters, each ranging over an interval of length $\asymp_h N$, we have
  \[ \#\mathcal{P}_N\gg_h N^{h-2}. \]
  Therefore, for each fixed $N$ with $m_N(x)\geq h^2$, the number of distinct ordered representations with value in $[-x,x]$ arising from this construction is $\gg \#\mathcal{P}_N\,m_N(x)\gg_h N^{h-2}m_N(x)$. Summing over all such $N$ gives
  \[ \sum_{|n|\leq x} r_{A,\ul{b}}(n)\gg_h \sum_{\substack{N\geq 1\\ m_N(x)\geq h^2}} N^{h-2}\,m_N(x). \qedhere \]
 \end{proof}

 With Lemma \ref{dyadic-sliding} in hand, the proof of Theorem \ref{MTbdd} becomes a matter of controlling the gap parameter $\widetilde{\Delta}_N$ under the hypotheses of \ref{bdd-i} and \ref{bdd-ii}, and then evaluating the corresponding sum over a suitable range of $N$.

%%%%%%%%%%%%%%%%%%%%%%%%%%%%%%%%%%%%%%%%%%%
\subsection{Proof of Theorem \ref{MTbdd} \ref{bdd-i}}
 From the hypothesis $\Delta_N=o(N^{h-1}\log N)$ we also have $\widetilde{\Delta}_N=\max_{N\leq m\leq 2N}\Delta_m=o(N^{h-1}\log N)$. Hence there exists a non-decreasing function $\xi(x)\to \infty$ such that
 \[ \widetilde{\Delta}_N\leq \frac{N^{h-1}\log N}{\xi(N)}. \]

 Fix $0<\eps<1/h$, and let $x$ be large. For each $N$ in the range
 \[ x^{1/h}\leq N\leq x^{(1-\eps)/(h-1)}, \]
 we have $x\leq N^h$, and therefore $x/N^{h-1}\leq N$. Thus, from the definition \eqref{defmN} of $m_N(x)$ 
 \begin{align*}
  m_N(x) = \min\bigg\{\frac{x}{P\widetilde{\Delta}_N},\,N\bigg\} &\geq \min\bigg\{\frac{x\,\xi(N)}{PN^{h-1}\log N},\,N\bigg\} \\
  &\gg \frac{x}{N^{h-1}}\min\bigg\{1,\frac{\xi(N)}{\log N}\bigg\}.
 \end{align*}
 Also, since $N\leq x^{(1-\eps)/(h-1)}$, we have
 \[ \frac{x}{P\widetilde{\Delta}_N}\geq \frac{x\,\xi(N)}{PN^{h-1}\log N}\geq \frac{x^\eps\,\xi(x^{1/h})}{P\log x}, \]
 so $m_N(x)\geq h^2$ in this range for all sufficiently large $x$.

 Lemma \ref{dyadic-sliding} therefore gives
 \begin{align*}
  \sum_{|n|\leq x} r_{A,\ul{b}}(n) &\gg \sum_{x^{1/h}\leq N\leq x^{(1-\eps)/(h-1)}} N^{h-2}\,m_N(x) \\
  &\gg \sum_{x^{1/h}\leq N\leq x^{(1-\eps)/(h-1)}} \frac{x}{N}\min\bigg\{1,\frac{\xi(N)}{\log N}\bigg\} \\
  &\gg x\min\bigg\{1,\frac{\xi(x^{1/h})}{\log x}\bigg\} \sum_{x^{1/h}\leq N\leq x^{(1-\eps)/(h-1)}} \frac{1}{N}.
 \end{align*}
 Since $\eps < 1/h$, we have $\sum_{x^{1/h}\leq N\leq x^{(1-\eps)/(h-1)}} N^{-1} \gg \log x$, and hence
 \[ \frac{1}{x}\sum_{|n|\leq x} r_{A,\ul{b}}(n) \gg \min\{\log x,\,\xi(x^{1/h})\}. \]
 Since $\xi(x^{1/h})\to\infty$, this proves \ref{bdd-i}. \hfill$\square$

%%%%%%%%%%%%%%%%%%%%%%%%%%%%%%%%%%%%%%%%%%%
\subsection{Proof of Theorem \ref{MTbdd} \ref{bdd-ii}}
 From the hypothesis $\Delta_N\ll N^{h-1}$ we also have $\widetilde{\Delta}_N=\max_{N\leq m\leq 2N}\Delta_m\ll N^{h-1}$. Fix $0<\eps<1/h$, and let $x$ be large. For each $N$ in the range
 \[ x^{1/h}\leq N\leq x^{(1-\eps)/(h-1)}, \]
 we have $x\leq N^h$, and therefore $x/N^{h-1}\leq N$. Hence, from the definition \eqref{defmN} of $m_N(x)$,
 \[ m_N(x) = \min\bigg\{\frac{x}{P\widetilde{\Delta}_N},\,N\bigg\} \gg \min\bigg\{\frac{x}{N^{h-1}},\,N\bigg\} = \frac{x}{N^{h-1}}. \]
 Also, since $N\leq x^{(1-\eps)/(h-1)}$, we have
 \[ \frac{x}{P\widetilde{\Delta}_N}\gg \frac{x}{N^{h-1}}\gg x^{\eps}, \]
 so $m_N(x)\geq h^2$ in this range for all sufficiently large $x$.

 Lemma \ref{dyadic-sliding} therefore gives
 \begin{align*}
  \sum_{|n|\leq x} r_{A,\ul{b}}(n) &\gg \sum_{x^{1/h}\leq N\leq x^{(1-\eps)/(h-1)}} N^{h-2}\,m_N(x) \\
  &\gg x\sum_{x^{1/h}\leq N\leq x^{(1-\eps)/(h-1)}} \frac{1}{N}.
 \end{align*}
 Since $\eps<1/h$, we have $\sum_{x^{1/h}\leq N\leq x^{(1-\eps)/(h-1)}} N^{-1}\gg \log x$, and hence
 \[ \frac{1}{x}\sum_{|n|\leq x} r_{A,\ul{b}}(n)\gg \log x, \]
 which proves \ref{bdd-ii}. \hfill$\square$

%%%%%%%%%%%%%%%%%%%%%%
\addtocontents{toc}{\protect\setcounter{tocdepth}{0}}
\section*{Acknowledgements}
 The author acknowledges the support of the São Paulo Research Foundation (FAPESP), Brazil, Process No.~2025/15961-3.
 
\addtocontents{toc}{\protect\setcounter{tocdepth}{1}}
%%%%%%%%%%%%%%%%%%%%%%

% ----------------------------------------------------------------
\bibliographystyle{amsplain}
\bibliography{$HOME/Academie/Recherche/_latex/bibliotheca}%

\providecommand{\bysame}{\leavevmode\hbox to3em{\hrulefill}\thinspace}
\providecommand{\MR}{\relax\ifhmode\unskip\space\fi MR }
% \MRhref is called by the amsart/book/proc definition of \MR.
\providecommand{\MRhref}[2]{%
  \href{http://www.ams.org/mathscinet-getitem?mr=#1}{#2}
}
\providecommand{\href}[2]{#2}
\begin{thebibliography}{1}

\bibitem{che93}
S.~Chen, \emph{On {S}idon sequences of even orders}, Acta Arith. \textbf{64}
  (1993), no.~4, 325--330.

\bibitem{erdtur41}
P.~Erd\H{o}s and P.~Tur\'an, \emph{On a problem of {S}idon in additive number
  theory, and on some related problems}, J. Lond. Math. Soc. \textbf{16}
  (1941), 212--215.

\bibitem{halberstam83}
H.~Halberstam and K.~F. Roth, \emph{Sequences}, revised ed., Springer, 1983.

\bibitem{jia94}
X.-D. Jia, \emph{On {$B_{2k}$}-sequences}, J. Number Theory \textbf{48} (1994),
  no.~2, 183--196.

\bibitem{nat12}
M.~B. Nathanson, \emph{Thin bases in additive number theory}, Discrete Math.
  \textbf{312} (2012), 2069--2075.

\bibitem{nat22}
\bysame, \emph{The {B}ose--{C}howla argument for {S}idon sets}, J. Number
  Theory \textbf{238} (2022), 133--146.

\bibitem{ruz90}
I.~Z. Ruzsa, \emph{A just basis}, Monatsh. Math. \textbf{109} (1990), 145--151.

\bibitem{ruz98}
\bysame, \emph{An infinite {S}idon sequence}, J. Number Theory \textbf{68}
  (1998), no.~1, 63--71.

\bibitem{taovu06}
T.~Tao and V.~H. Vu, \emph{Additive combinatorics}, Cambridge Stud. Adv. Math.,
  vol. 105, Cambridge Univ. Press, 2006.

\end{thebibliography}
\end{document}